\documentclass[12pt]{amsart}
\usepackage[utf8]{inputenc}
\usepackage{amsfonts}
\usepackage{amsmath}
\usepackage{amssymb}
\usepackage{amsthm}
\usepackage{xcolor}
\usepackage[margin=1.3in]{geometry}
\usepackage{graphicx}
\usepackage[11pt]{moresize}
\usepackage{tikz}
\tikzstyle{vertex}=[circle,draw=black,fill=black,inner sep=0,minimum size=3pt,text=white,font=\footnotesize]

\newtheorem{theorem}{Theorem}
\newtheorem*{conjecture*}{Conjecture}
\newtheorem{proposition}{Proposition}[section]
\newtheorem{lemma}[proposition]{Lemma}
\newtheorem{corollary}[proposition]{Corollary}
\theoremstyle{remark}
\newtheorem{definition}[proposition]{Definition}

\newtheorem*{remark*}{Remark}

\newcommand{\vs}{\vspace{3mm}}

\newcommand{\hs}{\hspace{1mm}}
\newcommand{\hsss}{\hspace{3mm}}

\newcommand{\Z}{\mathbb{Z}}

\newcommand{\N}{\mathbb{N}}

\newcommand{\mc}{\mathcal}

\newcommand{\ep}{\epsilon}

\newcommand{\sub}{\subseteq}

\newcommand{\modd}[1]{\text{ mod } #1}
\newcommand\height{5}
\newcommand\heighty{5.3}

\newcommand{\ol}{\overline}
\newcommand{\wt}{\widetilde}

\raggedbottom

\title{A Random Analogue of Gilbreath's Conjecture}
\author{Zachary Chase}
\thanks{The author is partially supported by Ben Green's Simons Investigator Grant 376201 and gratefully acknowledges the support of the Simons Foundation.}
\address{Mathematical Institute, Andrew Wiles Building, Radcliffe Observatory Quarter, Woodstock Road, Oxford OX2 6GG, UK}
\email{zachary.chase@maths.ox.ac.uk}
\date{May 1, 2020}

\pagestyle{plain}

\begin{document}

\begin{abstract}
A well-known conjecture of Gilbreath, and independently Proth from the 1800s, states that if $a_{0,n} = p_n$ denotes the $n^{\text{th}}$ prime number and $a_{i,n} = |a_{i-1,n}-a_{i-1,n+1}|$ for $i, n \ge 1$, then $a_{i,1} = 1$ for all $i \ge 1$. It has been postulated repeatedly that the property of having $a_{i,1} = 1$ for $i$ large enough should hold for any choice of initial $(a_{0,n})_{n \ge 1}$ provided that the gaps $a_{0,n+1}-a_{0,n}$ are not too large and are sufficiently random. We prove (a precise form of) this postulate.   
\end{abstract}

\maketitle

\section{Introduction}

Given any sequence of non-negative integers $(a_n)_{n \ge 1}$, we can form the sequence of non-negative integers $(|a_n-a_{n+1}|)_{n \ge 1}$. Start with the primes as the initial sequence and iterate this consecutive differencing procedure. Gilbreath's conjecture is that the first term in every sequence, starting with the first iteration, is a $1$. Precisely, if $a_{0,n} = p_n$ for $n \ge 1$ and $a_{i,n} = |a_{i-1,n}-a_{i-1,n+1}|$ for $i, n \ge 1$, then $a_{i,1} = 1$ for all $i \ge 1$. Below are the first few terms of the first few iterations. 

\vspace{3.1mm}

\hspace{49mm} 2 \hsss 3 \hsss 5 \hsss 7 \hspace{1mm} 11 \hspace{-.5mm} 13 \hspace{-.1mm} 17 

\begin{center}
1 \hsss 2 \hsss 2 \hsss 4 \hsss 2 \hsss 4

1 \hsss 0 \hsss 2 \hsss 2 \hsss 2

1 \hsss 2 \hsss 0 \hsss 0

1 \hsss 2 \hsss 0

1 \hsss 2

1
\end{center}

\vspace{2.6mm}

Proth \cite{proth2} discussed Gilbreath's conjecture in 1878, before Gilbreath independently made the conjecture. Many sources claim Proth asserted he had a proof of the conjecture, and that his proof was wrong. However, we believe this claim is baseless. See Section \ref{history} for more details. Odlyzko \cite{odlyzko} verified Gilbreath's conjecture for $1 \le i \le \pi(10^{13}) \approx 3.34\times 10^{11}$. One is led to wonder how special the primes are in Gilbreath's conjecture and whether any sequence beginning with $2$ followed by an increasing sequence of odd numbers with small and ``random" gaps between them will have first term $1$ from some iteration onwards. 

\vs

Odlyzko, at the end of Section $2$ of \cite{odlyzko}, speculates that such a random sequence indeed will have first term $1$ from some iteration onwards. Additionally, Problem 68 of \cite{10lectures} asks what gap or density properties of an initial sequence suffices to ensure the conclusion of Gilbreath's conjecture. Despite Gilbreath's conjecture being around for over a decade and several additional sources postulating that the conjecture should hold for initial sequences with small and random gaps, as of date, nothing has actually been \textit{proven} along these lines, nor about Gilbreath's conjecture specifically. 

\vspace{1.5mm}

In this paper, we initiate a rigorous study of Gilbreath's conjecture by proving a random analogue of it.  

\begin{theorem}\label{truemain}
Let $f : \N \to \N$ be an increasing function with $f(M) \le \frac{1}{100}\frac{\log\log M}{\log\log\log M}$ for $M$ large and $f(M) \ge 2$ for all $M \ge 1$. Let $a_1,a_2,\dots$ be a random infinite sequence formed as follows. Let $a_1 = 2, a_2 = 3$, and for $n \ge 2$, $a_{n+1} = a_n+2u_n$, where $u_n$ is drawn uniformly at random from $\{0,1,\dots,f(n)-1\}$, independent of the other $u_i$'s. Then, with probability $1$, there is some $M_0$ so that for all $M \ge M_0$, after $M$ iterations of consecutive differencing, the first term of the sequence is a $1$.
\end{theorem}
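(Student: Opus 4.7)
My plan rests on the following \emph{propagation observation}, standard in the Gilbreath literature: if row $M$ satisfies $a_{M,1}=1$ and $a_{M,n}\in\{0,2\}$ for $2\le n\le L$ (a \emph{clean prefix of length $L$}), then $a_{M+k,1}=1$ for every $k=0,1,\dots,L-1$, because $|1-\{0,2\}|=\{1\}$ and $|\{0,2\}-\{0,2\}|\subseteq\{0,2\}$ force the clean prefix to shrink by at most one position per differencing step. Setting $M_j:=2^j$ and $L_j:=2^j$ for $j\ge 1$, the intervals $[M_j,M_j+L_j-1]=[2^j,2^{j+1}-1]$ partition the tail $[2,\infty)$, so the conclusion of the theorem reduces to showing that almost surely, row $M_j$ admits a clean prefix of length at least $L_j$ for all sufficiently large $j$.

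By the first Borel--Cantelli lemma (which needs no independence), the latter follows once we verify
\[
\sum_j \Prob\bigl(\text{row }M_j\text{ has no clean prefix of length }L_j\bigr)<\infty.
\]
Union bounding over positions (and noting that $a_{M,n}$ is automatically odd at $n=1$ and even at $n\ge 2$ for $M\ge 1$), each summand is at most $\sum_{n\le L_j}\Prob(a_{M_j,n}>2)$. The argument therefore reduces to a single-point decay estimate of the form
\[
\Prob(a_{M,n}>2)\;\le\;C\bigl(1-c/f(n+M)\bigr)^{M},
\]
for absolute constants $C,c>0$. Granting this, and substituting $f(2^{j+1})\lesssim j/\log j$ from the theorem's hypothesis on $f$, the Borel--Cantelli sum becomes $\sum_j 2^j\exp\bigl(-c\cdot 2^j\log j/j\bigr)$, which converges with enormous slack.

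The main obstacle is establishing the single-point decay rigorously. For $n\ge 2$, $a_{M,n}$ depends only on the iid gap variables $d_n,\dots,d_{n+M-1}$, so the question is about iterated application of the absolute-difference operator $T\colon\mu\mapsto\mu'$ (where $\mu'$ is the law of $|X-Y|$ for $X,Y\sim\mu$ iid) on iid uniform random variables. Heuristically $T$ contracts tail mass: for $X,Y$ iid uniform on $\{0,\dots,K-1\}$, a direct computation yields $\Prob(|X-Y|\ge 2)=\tfrac{K-1}{K}\Prob(X\ge 2)$. The fixed points of $T$ on nonnegative integers are the point mass at $0$ together with the Bernoulli-$1/2$ distributions on $\{0,1\}$ or $\{0,2\}$---precisely the ``clean'' distributions we want the iterated rows to collapse onto. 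To iterate the one-step contraction, I would construct a Lyapunov function---e.g.\ a weighted tail sum $\Phi(\mu)=\sum_{k\ge 2}w_k\mu(k)$ with weights tuned so that $\Phi(T\mu)\le(1-c/K)\Phi(\mu)$ for distributions supported in $\{0,\dots,K-1\}$. A useful auxiliary observation is that $T$ acts linearly on the mod-$4$ projection of $\mu$ (as XOR on $\{0,2\}$), so the genuinely difficult dynamics live in the high-order $(\ge 4)$ tail, which is exactly what the Lyapunov should target. The nonlinearity of $T$ and the presence of Bernoulli fixed points are what make this step delicate; the $n=1$ boundary case, which additionally involves $d_1=1$, is routine once the bulk estimate is in hand, since this odd value only affects parity.
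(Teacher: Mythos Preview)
Your outer shell---clean prefixes, dyadic checkpoints, Borel--Cantelli---is fine and close in spirit to the paper's. The gap is in the single-point decay, and it is not merely a matter of filling in a Lyapunov construction: the reduction you state is false. You assert that ``the question is about iterated application of the absolute-difference operator $T\colon\mu\mapsto\mu'$ on iid uniform random variables,'' and then propose to prove $\Phi(T\mu)\le(1-c/K)\Phi(\mu)$. But the law of $a_{M,n}$ is \emph{not} $T^M(\mu_0)$. After one step the adjacent entries $a_{1,n}=|a_{0,n}-a_{0,n+1}|$ and $a_{1,n+1}=|a_{0,n+1}-a_{0,n+2}|$ share the initial variable $a_{0,n+1}$ and are dependent, so already the step-$2$ marginal is not $T$ applied to the step-$1$ marginal. (For $C=3$ uniform one checks directly that $\Prob(a_{2,n}=0)=11/27=33/81$, whereas $T^2(\mu_0)(\{0\})=29/81$.) A Lyapunov inequality for $T$ therefore says nothing about the actual row marginals, and the whole decay argument collapses. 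Relatedly, your list of fixed points of $T$ omits Bernoulli$(\tfrac12)$ on $\{0,k\}$ for every $k\ge 3$; the paper's ``exotic examples'' that freeze forever in $\{0,3\}$ are precisely the manifestation of the $k=3$ fixed point, and they show that the obstruction to decay lives in the \emph{joint} law of a window of initial values, not in one-point marginals.

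The paper does not attempt a pointwise marginal estimate at all. It instead controls block structure: the current maximum $d$ can survive only inside long $\{0,d\}$-blocks (Lemma~\ref{blockdestruction}), and any long $\{0,d\}$-block at depth $i$ forces either a long $d\Z$-block in the initial row or a long zero-free block at some earlier depth (Lemma~\ref{inverseiterates}). Long zero-free blocks are then ruled out via a uniform-in-$i$ lower bound $\Prob\bigl(f_i(a_1,\dots,a_i)=0\bigr)\ge (1/C)^{(200C^2)^{2C}}$ (Corollary~\ref{0bound}), whose proof (Proposition~\ref{lowerbound}) is the technical heart of the paper: it works on the De~Bruijn graph on $[C]_0^{\ol{i}}$ and uses the bootstrapping random-walk Proposition~\ref{bootstrap}, thereby confronting the full joint distribution of an initial window rather than iterated one-dimensional marginals.
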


Computations suggest that Gilbreath's conjecture holds because $0$s and $2$s form to the right of the leading $1$ early on. We prove Theorem \ref{truemain} by showing that our random initial sequence indeed has that property almost surely. Since the first iteration is $1, 2u_2, 2u_3, \dots$, if we ignore the leading $1$ and divide by $2$, what we wish to show is encapsulated by the following theorem, which is the heart of the paper.

\begin{theorem}\label{main}
For $M$ large, for any $C$ with $2 \le C \le \frac{1}{100}\frac{\log\log M}{\log\log\log M}$, if we form an initial sequence of length $M$ by choosing numbers from $\{0,\dots,C-1\}$ independently and uniformly at random, then, with probability at least $1-e^{-e^{\sqrt[20]{\log M}}}$, after $e^{\sqrt[5]{\log M}}$ iterations of consecutive differencing, everything is a $0$ or $1$. 
\end{theorem}

The randomness in Theorem \ref{main} is certainly necessary. For example, if the initial sequence consists of only $0$s and $3$s, then after any number of iterations, everything is still a $0$ or $3$. However, there are more exotic examples of initial sequences

\vspace{0.5mm}

\begin{center}
2 \hsss 0 \hsss 6 \hsss 0 \hsss 2 \hsss 2 \hsss 6 \hsss 5 \hsss 0 \hsss 0 \hsss 6 \hsss 1 \hsss 3 \hsss 2 \hsss 2 \hsss 3 \hsss 0 \hsss 6 \hsss 0 \hsss 5 \hsss 

2 \hsss 6 \hsss 6 \hsss 2 \hsss 0 \hsss 4 \hsss 1 \hsss 5 \hsss 0 \hsss 6 \hsss 5 \hsss 2 \hsss 1 \hsss 0 \hsss 1 \hsss 3 \hsss 6 \hsss 6 \hsss 5 \hsss 

4 \hsss 0 \hsss 4 \hsss 2 \hsss 4 \hsss 3 \hsss 4 \hsss 5 \hsss 6 \hsss 1 \hsss 3 \hsss 1 \hsss 1 \hsss 1 \hsss 2 \hsss 3 \hsss 0 \hsss 1 \hsss 

4 \hsss 4 \hsss 2 \hsss 2 \hsss 1 \hsss 1 \hsss 1 \hsss 1 \hsss 5 \hsss 2 \hsss 2 \hsss 0 \hsss 0 \hsss 1 \hsss 1 \hsss 3 \hsss 1 \hsss 

0 \hsss 2 \hsss 0 \hsss 1 \hsss 0 \hsss 0 \hsss 0 \hsss 4 \hsss 3 \hsss 0 \hsss 2 \hsss 0 \hsss 1 \hsss 0 \hsss 2 \hsss 2 \hsss 

2 \hsss 2 \hsss 1 \hsss 1 \hsss 0 \hsss 0 \hsss 4 \hsss 1 \hsss 3 \hsss 2 \hsss 2 \hsss 1 \hsss 1 \hsss 2 \hsss 0 \hsss 

0 \hsss 1 \hsss 0 \hsss 1 \hsss 0 \hsss 4 \hsss 3 \hsss 2 \hsss 1 \hsss 0 \hsss 1 \hsss 0 \hsss 1 \hsss 2 \hsss 

1 \hsss 1 \hsss 1 \hsss 1 \hsss 4 \hsss 1 \hsss 1 \hsss 1 \hsss 1 \hsss 1 \hsss 1 \hsss 1 \hsss 1 \hsss 

0 \hsss 0 \hsss 0 \hsss 3 \hsss 3 \hsss 0 \hsss 0 \hsss 0 \hsss 0 \hsss 0 \hsss 0 \hsss 0 \hsss 
\end{center}

\noindent for which all future iterations have only $0$s and $3$s (say). These exotic examples\footnote{To clarify, in the setting in which the primes are the initial sequence, the analogous situation to having only $0$s and $3$s is having only $0$s and $6$s past the first index, making the first index very likely to repeatedly change from $1$ to $5$ (see Lemma \ref{mod2}), thereby violating Gilbreath's conjecture.} suggest that we are far away from a proof of Gilbreath's conjecture. 

\vs

\section{A General Bootstrapping Argument}

In this section, we prove a result about random walks on regular directed graphs that will be of use to proving Theorem \ref{main}. 

\begin{definition}
A directed graph is \textit{regular} if there is a positive integer $d$ such that each vertex has in-degree and out-degree equal to $d$. We allow our graphs to have self-loops (but no multiple edges). For our discussion, a \textit{simple random walk} on a regular directed graph of degree $d$ is formed by choosing a starting point uniformly at random, and then walking along the directed edges, with each out-edge chosen with probability $1/d$, independent of the previous steps.  
\end{definition}

\begin{proposition}\label{bootstrap}
Let $G = (V,E)$ be a regular directed graph. Suppose $V$ is red-blue colored such that the probability a simple random walk on $G$ of length $L$ consists entirely of red vertices is at least $c$. Then the probability a simple random walk on $G$ of length $\lfloor(1+\frac{1}{10}c^2)L\rfloor$ consists entirely of red vertices is at least $\frac{1}{10}c^2$.
\end{proposition}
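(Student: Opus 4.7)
My plan begins with a trivial reduction: if $L < 10/c^2$, then $\lfloor(1+c^2/10)L\rfloor = L$ and the conclusion $p_L \ge c^2/10$ follows immediately from $p_L \ge c \ge c^2/10$. So I assume $L \ge 10/c^2$, set $K = \lfloor c^2 L/10\rfloor \ge 1$ and $L' = L+K$, and aim to prove that $p_{L'} \ge c^2/10$, where $p_n$ denotes the probability that a length-$n$ simple random walk on $G$ is all red. The key structural observation I would exploit is that because $G$ is $d$-regular, the transition matrix $P$ of the walk is doubly stochastic, so the uniform distribution on $V$ is stationary. As a consequence, every length-$L$ contiguous sub-window of a length-$L'$ simple random walk is itself distributed as a simple random walk of length $L$, and so for each $s \in \{0,1,\dots,K\}$ the event $A_s$ that positions $[s,s+L-1]$ of the walk are all red satisfies $\Prob(A_s) = p_L \ge c$. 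Crucially, since $K \le L$, the two extreme windows $A_0$ (positions $[0,L-1]$) and $A_K$ (positions $[K, L+K-1]$) overlap and together cover every position of the walk, so $A_0 \cap A_K$ is exactly the event that the whole walk is all red; hence $p_{L'} = \Prob(A_0 \cap A_K)$.

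To lower bound $\Prob(A_0 \cap A_K)$, my plan is a second-moment argument. Let $Y = \sum_{s=0}^{K} \mathbf{1}_{A_s}$. Direct computation gives $\E Y = (K+1)p_L$ and, since $A_s \cap A_t$ is the event that the contiguous block of length $L + |s-t|$ starting at $\min(s,t)$ is all red, $\E Y^2 = \sum_{s,t=0}^{K} p_{L+|s-t|} \le (K+1)^2 p_L$ (using that $p_n$ is non-increasing). Paley--Zygmund then yields $\Prob(Y>0) \ge p_L \ge c$, which merely recovers the trivial $\Prob(A_0) \ge c$. What I actually need is $\Prob(Y = K+1) = p_{L'}$, so the hard part will be to upgrade ``$Y\ge 1$'' (some window is red) to ``$Y = K+1$'' (every window, and hence the entire walk, is red).

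My intended approach for that final step is to apply the Markov property at $X_{L-1}$ to rewrite
\[ \Prob(A_0 \cap A_K) = \E\bigl[\alpha(X_{L-1})\, q_K(X_{L-1})\bigr], \]
where $\alpha(v) = \Prob(\text{walk of length }L\text{ ending at }v\text{ is all red})$ and $q_K(v) = \Prob(\text{next }K\text{ positions after }v\text{ are all red})$; by stationarity these satisfy $\E\alpha = p_L$ and $\E q_K = p_K \ge p_L \ge c$. It then suffices to establish a positive-correlation-type inequality of the form $\E[\alpha\, q_K] \ge p_L p_K/10$, valid for any regular digraph and red-blue coloring, which together with $p_L, p_K \ge c$ delivers $p_{L'} \ge c^2/10$. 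Finding the correct structural argument here---one that leverages the doubly stochastic nature of $P$ and produces precisely the factor $1/10$ that matches the extension ratio $c^2/10$ in the statement---is the step I expect to be the main obstacle.
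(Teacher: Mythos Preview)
Your reduction to $\Pr(A_0\cap A_K)=\E[\alpha(X_{L-1})\,q_K(X_{L-1})]$ is correct, but the argument halts exactly where the real work begins. The inequality you hope to establish, $\E[\alpha\,q_K]\ge p_Lp_K/10$, is (since $p_L,p_K\ge c$) at least as strong as the conclusion of the proposition itself, so you have not reduced to anything easier. And there is no structural reason for it to hold for the \emph{single} shift $K=\lfloor c^2L/10\rfloor$ you have fixed in advance: double-stochasticity of $P$ says nothing about the correlation of $\alpha$ (long red past) and $q_K$ (short red future), which can in principle be concentrated on quite different parts of $V$. The ``main obstacle'' you flag is not a detail to be filled in; it is the entire content of the proposition.

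The paper's proof does not commit to a shift. It defines $w_j(v)=\Pr(X_1,\dots,X_L\text{ all red}\mid X_j=v)$, selects $K=\lceil 3/c^2\rceil$ equally spaced times $k_1<\cdots<k_K$ in $[1,L]$, and applies Cauchy--Schwarz to $\sum_v\bigl(\sum_j w_{k_j}(v)\bigr)^2$. Each $w_{k_j}$ has mean at least $c$ and sup-norm at most $1$, so the diagonal contribution $\sum_{j,v}w_{k_j}(v)^2\le K|V|$ is too small to account for the square of the total mass $(Kc|V|)^2$; this forces the off-diagonal sum $\sum_{j<j'}\sum_v w_{k_j}(v)w_{k_{j'}}(v)$ to be at least of order $|V|/c^2$. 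Pigeonhole over the at most $K^2$ pairs then produces \emph{some} $j<j'$ with $\sum_v w_{k_j}(v)w_{k_{j'}}(v)\ge 3|V|/(c^2K^2)\ge c^2|V|/10$. Bounding $w_{k_{j'}}(v)$ by its ``past'' factor and $w_{k_j}(v)$ by its translated ``future'' factor, the Markov property converts this into $p_{L+(k_{j'}-k_j)}\ge c^2/10$; finally $k_{j'}-k_j\ge L/K-1\ge c^2L/10$, and monotonicity of $n\mapsto p_n$ gives the stated length. The idea you are missing is that the extension length cannot be chosen a priori: Cauchy--Schwarz plus pigeonhole only guarantee that \emph{one} of roughly $1/c^2$ candidate shifts works, and monotonicity then lets you round that shift down to $\lfloor c^2L/10\rfloor$.
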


\begin{proof}
Let $X_1,X_2,\dots$ denote the steps of a simple random walk. Define functions $w_1,\dots,w_L$ on $V$ by $w_j(v) := \Pr(X_1,\dots,X_L \text{ all red} | X_j = v).$ Note (by, e.g., induction on the number of steps) the regularity assumption implies $$w_j(v) = |V|\Pr(X_1,\dots,X_L \text{ all red}, X_j = v).$$ Thus, for any $j$, letting $$w_j(V) := \sum_{v \in V} w_j(v),$$ we have by assumption \begin{align*} w_j(V) &= \sum_v |V|\Pr(X_1,\dots,X_L \text{ all red}, X_j = v) \\ &= |V|\Pr(X_1,\dots,X_L \text{ all red}) \\ &\ge c|V|.\end{align*} Let $K = \lceil \frac{3}{c^2} \rceil$, and let $k_1,\dots,k_K$ be $k_j := \lfloor\frac{j}{K}L\rfloor$. By Cauchy-Schwarz, \begin{align}\label{csin} \left(\sum_v \sum_j w_{k_j}(v)\right)^2 &\le \left[\sum_v 1^2\right]\cdot\left[\sum_v \left(\sum_j w_{k_j}(v)\right)^2\right] \\ \nonumber &= |V|\left[\sum_j \sum_v w_{k_j}(v)^2+2\sum_{j < j'} \sum_v w_{k_j}(v)w_{k_{j'}}(v)\right].\end{align} Note, since $||w_j||_\infty \le 1$, we have $$\sum_j \sum_v w_{k_j}(v)^2 \le \sum_j \sum_v w_{k_j}(v) = \sum_j |V|\Pr(X_1,\dots,X_L \text{ all red}) \le K|V|;$$ also, $$\sum_v \sum_j w_{k_j}(v) = \sum_j w_{k_j}(V) \ge Kc|V|.$$ So \eqref{csin} implies $$K^2c^2|V|^2 \le |V|\left[K|V|+2\sum_{j < j'} \sum_v w_{k_j}(v)w_{k_{j'}}(v)\right],$$ and thus, since $K^2c^2|V|-K|V|$ is increasing in $K$ for $K \ge 3/c^2$, $$\frac{6}{c^2}|V| \le 2\sum_{j < j'} \sum_v w_{k_j}(v)w_{k_{j'}}(v).$$ By the pigeonhole principle, there are $j < j'$ with $$\sum_v w_{k_j}(v)w_{k_{j'}}(v) \ge \frac{1}{K^2}\frac{3}{c^2}|V|.$$ Using {\small $$w_{k_j}(v) \le \Pr(X_{k_j+1},\dots,X_L \text{ all red} | X_{k_j} = v) = \Pr(X_{k_{j'}+1},\dots,X_{L+k_{j'}-k_j} \text{ all red} | X_{k_{j'}} = v),$$} which is true merely due to translation invariance of the random walk, and $$w_{k_{j'}}(v) \le \Pr(X_1,\dots,X_{k_{j'}} \text{ all red} | X_{k_{j'}} = v),$$ we obtain {\footnotesize \begin{align*} \frac{1}{K^2}\frac{3}{c^2}|V| &\le \sum_v \Pr(X_1,\dots,X_{k_{j'}} \text{ all red} | X_{k_{j'}} = v)\Pr(X_{k_{j'}+1},\dots,X_{L+k_{j'}-k_j} \text{ all red} | X_{k_{j'}} = v) \\ &= |V| \sum_v \Pr(X_1,\dots,X_{k_{j'}} \text{ all red}, X_{k_{j'}} = v)\Pr(X_{k_{j'}+1},\dots,X_{L+k_{j'}-k_j} \text{ all red} | X_{k_{j'}} = v) \\ &= |V|\sum_v \Pr(X_1,\dots,X_{L+k_{j'}-k_j} \text{ all red}, X_{k_{j'}} = v)\\ &= |V|\Pr(X_1,\dots,X_{L+k_{j'}-k_j} \text{ all red}),\end{align*}} yielding $$\Pr(X_1,\dots,X_{L+k_{j'}-k_j} \text{ all red}) \ge \frac{1}{K^2}\frac{3}{c^2}.$$ Note $K \le \frac{3}{c^2}+1 \le \frac{4}{c^2}$, so $\frac{1}{K^2}\frac{3}{c^2} \ge \frac{3}{16}c^2 \ge \frac{1}{10}c^2$. Since the proposition is trivial if $L < 10/c^2$, we may assume $L \ge 10/c^2$ to obtain $k_{j'}-k_j \ge \frac{L}{K}-1 \ge \frac{c^2}{4}L-1 \ge \frac{c^2}{10}L$. 
\end{proof}

\vspace{1mm}

\begin{remark*}
It is natural to think that Proposition \ref{bootstrap} can be extended, in some form, to arbitrary length increases. However, such an extension is not possible in general (note that iterating Proposition \ref{bootstrap} results in only a summable geometric series of length increases). For example, consider $V = \{1,\dots,n\}, E = \{(1 \mapsto 2),\dots,(n-1 \mapsto n),(n \mapsto 1)\}$ with the vertices $\{1,\dots,\frac{1}{10}n\}$ colored red and the rest blue. Then with $L = \frac{1}{20}n$ and $c = \frac{1}{20}$, it holds that a simple random walk on $G$ of length $L$ will hit only red vertices with probability at least $c$. However, of course no simple (random) walk on $G$ of length $5L = \frac{1}{2}n$ will hit only red vertices.

\vspace{1.5mm}

Examples of such ``bad" colorings also exist on the graph we apply Proposition \ref{bootstrap} to, namely a Debrujin graph. We don't think these colorings are actually the ones we need to address in our proof of Theorem \ref{main}, but we couldn't prove that.
\end{remark*}

\section{A Lower Bound for Ending with $0$}

We begin by exploiting the main property of the ``dynamical system" of taking consecutive differences: the supremum never increases. In fact, we use that it quickly decreases provided there is no trivial obstruction to it doing so (Lemma \ref{blockdestruction}). 

\begin{definition}
We say non-negative integers $a_1,\dots,a_i$ \textit{come from} $\wt{a}_1,\dots,\wt{a}_{i+1}$ if $|\wt{a}_j-\wt{a}_{j+1}| = a_j$ for each $1 \le j \le i$. Given $a_1,\dots,a_i$ and a subset $E \sub \Z$, an \textit{$E$-block} is a contiguous set of terms $a_{j_1+1},\dots,a_{j_1'}$ such that $a_j \in E$ for each $j_1+1 \le j \le j_1'$; the \textit{length} of the block is $j_1'-j_1$.  
\end{definition}

\begin{lemma}\label{blockdestruction}
Let $a_1,\dots,a_i$ be non-negative integers with $d := \max_j a_j$. Let $L$ denote the length of the longest $\{0,d\}$-block containing at least one $d$. If $L \le i-1$, then, after $L$ iterations of consecutive differencing, the largest number is at most $d-1$.
\end{lemma}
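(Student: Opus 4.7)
The plan is to show that if $a^{(L)}_j = d$ were to hold for some $j$ after $L$ iterations, we could reconstruct a long $\{0,d\}$-block in the original sequence containing a $d$, contradicting the maximality hypothesis. The argument rests on the elementary fact that if $0 \le x, y \le d$ and $|x-y| = d$, then $\{x,y\} = \{0,d\}$; combined with the standard observation that consecutive differencing never increases the maximum (so all iterated values stay in $[0,d]$), this gives strong constraints on the pre-images of any value equal to $d$.

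The key intermediate step is a one-step ``inflation lemma'' proved by reverse induction on the iteration level. The statement I would aim for is: if at some level $s$ the entries $a^{(s)}_l, a^{(s)}_{l+1}, \dots, a^{(s)}_{l+r}$ are all in $\{0,d\}$ with at least one equal to $d$, then at level $s-1$ the entries $a^{(s-1)}_l, a^{(s-1)}_{l+1}, \dots, a^{(s-1)}_{l+r+1}$ are all in $\{0,d\}$ with at least one equal to $d$. To prove this, I would pick an index $l+q_0$ with $a^{(s)}_{l+q_0} = d$; the elementary fact above immediately yields $a^{(s-1)}_{l+q_0}, a^{(s-1)}_{l+q_0+1} \in \{0,d\}$. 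Then I would propagate outward from this seed: at a neighboring index, either the level-$s$ value is $d$ (which directly forces both level-$(s-1)$ parents into $\{0,d\}$), or the level-$s$ value is $0$ (which forces the next level-$(s-1)$ entry to equal the already-known $\{0,d\}$ entry). Propagating in both directions covers all of $l, l+1, \dots, l+r+1$.

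Iterating this inflation lemma $L$ times, starting from the singleton block consisting of $a^{(L)}_j = d$, I would obtain a $\{0,d\}$-block $a^{(0)}_j, a^{(0)}_{j+1}, \dots, a^{(0)}_{j+L}$ of length $L+1$ in the original sequence, with at least one $d$. Since after $L$ iterations there are $i-L$ terms, we have $j \le i - L$, so $j + L \le i$, meaning this block actually lies inside $a_1, \dots, a_i$. This contradicts the hypothesis that the longest such block has length $L$ (which is legitimate precisely because $L \le i-1$ makes a length-$(L+1)$ block possible in principle). Hence no $a^{(L)}_j$ can equal $d$, so the new maximum is at most $d-1$.

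The main obstacle is the inflation lemma itself, since the hypothesis $|x-y|=0$ alone gives no pointwise control on $x$ and $y$; the asymmetry between the $d$-case and the $0$-case is resolved only by seeding the argument at a confirmed $d$ and propagating outward, which is the crucial trick to get right.
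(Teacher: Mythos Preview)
Your proof is correct and rests on the same key observation as the paper's: a $\{0,d\}$-block containing a $d$ at one level must have come from a strictly longer such block at the previous level (your ``inflation lemma'' is exactly what the paper asserts as ``easy to see''). The only difference is packaging---the paper runs a short forward induction on $L$, whereas you argue by contradiction and iterate backward from a surviving $d$---but the content is identical.
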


\begin{proof}
We induct on $L$. For $L=1$, the result is clear. Assume $L \ge 2$ and the result is true for all $L' < L$. It is easy to see that, since $d$ is the maximum, any $\{0,d\}$-block containing a $d$ after an iteration would have had to have come from a $\{0,d\}$-block of greater length containing a $d$, so the longest $\{0,d\}$-block containing a $d$ after one iteration is at most $L-1$, say $L'$. By induction, after $L'$ more iterations, the largest number is at most $d-1$. It follows that after $L$ (total) iterations, the largest number is at most $d-1$. 
\end{proof}

\vspace{1mm}

So, to prove Theorem \ref{main}, ``all" we need to do is argue that long $\{0,d\}$-blocks are unlikely to exist. In this next lemma, we observe that any large $\{0,d\}$-block essentially must have come from a block with no $0$s. 

\vspace{1mm}

\begin{lemma}\label{inverseiterates}
Suppose that after $i$ iterations, there is a $d\Z$-block of length $L$. Then either there was a $d\Z$-block of length $L+i$ in the initial sequence, or there is some $i'$, $0 \le i' \le i-1$, such that after $i'$ iterations, there is a block of length $L+i-i'$ with no $0$s.
\end{lemma}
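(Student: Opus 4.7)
The plan is to induct on $i$, the number of iterations. The base case $i=0$ is immediate, since the hypothesized $d\Z$-block of length $L$ in the sequence after $0$ iterations is already a $d\Z$-block of length $L+0$ in the initial sequence.

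For the inductive step, the key observation is the following. Suppose at some iteration we have $L+1$ consecutive terms $b_1,\dots,b_{L+1}$ whose $L$ consecutive differences $|b_j-b_{j+1}|$ all lie in $d\Z$. Since $b_j \equiv b_{j+1} \pmod{d}$ for each $j$, all of $b_1,\dots,b_{L+1}$ share a common residue $a \pmod{d}$. There is then a sharp dichotomy: either $a \equiv 0 \pmod{d}$, in which case $b_1,\dots,b_{L+1}$ is itself a $d\Z$-block of length $L+1$; or $a \not\equiv 0 \pmod{d}$, in which case none of the $b_j$ is $0$ (they are not even divisible by $d$), so we have a block of length $L+1$ containing no $0$s.

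Applying this observation: if after $i$ iterations there is a $d\Z$-block of length $L$, then the $L+1$ terms at iteration $i-1$ from which it came satisfy the hypothesis above, so either (a) there is a $d\Z$-block of length $L+1$ after $i-1$ iterations, or (b) there is a block of length $L+1$ with no $0$s after $i-1$ iterations. In case (b), we are done by taking $i' = i-1$, since $L+1 = L+i-(i-1)$. In case (a), we invoke the inductive hypothesis on the $d\Z$-block of length $L+1$ present after $i-1$ iterations: this produces either a $d\Z$-block of length $(L+1)+(i-1) = L+i$ in the initial sequence, or some $i'$ with $0 \le i' \le i-2$ and a block of length $(L+1)+(i-1)-i' = L+i-i'$ with no $0$s after $i'$ iterations. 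Either outcome matches the conclusion of the lemma.

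The only substantive step is the mod-$d$ dichotomy above; everything else is a bookkeeping induction. So the single point to verify carefully is that ``$L$ consecutive differences lie in $d\Z$'' forces all $L+1$ parent terms to share a residue mod $d$, which is immediate.
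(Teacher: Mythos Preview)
Your proof is correct and is essentially the same as the paper's: both argue by induction on $i$, with the inductive step resting on the observation that the $L+1$ parent terms all share a residue mod $d$, yielding either a $d\Z$-block of length $L+1$ or a block of length $L+1$ with no $0$s. You have simply spelled out the mod-$d$ dichotomy more explicitly than the paper does.
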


\begin{proof}
We prove by induction on $i$ the statement for all $L$. For $i=0$, the result is tautological. Take $i \ge 1$, and suppose the result holds for $i-1$. The $d\Z$-block of length $L$ had to come from either a $d\Z$-block of length $L+1$ or a block of length $L+1$ with no $0$s (since everything will have the same residue modulo $d$), so we are done by the induction hypothesis. 
\end{proof}

Another nice property of the consecutive differencing operation is that it ``commutes" with reducing mod $2$. This allows for a decently explicit formula for the parity of a term after a given number of iterations, merely in terms of the parities of the initial terms. 

\begin{definition}
For non-negative integers $a_1,a_2$, define $f_1(a_1,a_2) = |a_1-a_2|$, and for any $i \ge 2$ and non-negative $a_1,\dots,a_{i+1}$, define $f_i(a_1,\dots,a_{i+1}) = |f_{i-1}(a_1,\dots,a_i)-f_{i-1}(a_2,\dots,a_{i+1})|$. We say $a_1,\dots,a_{i+1}$ \textit{ultimately iterate} to $f_i(a_1,\dots,a_{i+1})$. 
\end{definition}

\begin{lemma}\label{mod2}
For any $i \ge 1$, there is a subset $J_i \sub [i+1]$ containing $1$ and $i+1$ so that for any non-negative integers $a_1,\dots,a_{i+1}$, $f_i(a_1,\dots,a_{i+1}) \equiv \sum_{j \in J_i} a_j \modd{2}$.
\end{lemma}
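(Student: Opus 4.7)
The plan is to induct on $i$, exploiting the elementary identity $|a - b| \equiv a + b \pmod{2}$. This says that modulo $2$ the consecutive-differencing operation is just coordinate-wise addition, so $f_i(a_1,\ldots,a_{i+1})$ is automatically a $\mathbb{Z}/2$-linear combination of the $a_j$'s; the content of the lemma is then merely the assertion that the coefficients of $a_1$ and $a_{i+1}$ in this combination are nonzero.

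For the base case $i = 1$, one has $f_1(a_1,a_2) = |a_1 - a_2| \equiv a_1 + a_2 \pmod{2}$, so $J_1 = \{1,2\}$ works. For the inductive step, I would unfold
$$f_i(a_1,\ldots,a_{i+1}) = |f_{i-1}(a_1,\ldots,a_i) - f_{i-1}(a_2,\ldots,a_{i+1})| \equiv f_{i-1}(a_1,\ldots,a_i) + f_{i-1}(a_2,\ldots,a_{i+1}) \pmod{2}$$
and apply the inductive hypothesis to both halves. If $J_{i-1} \subseteq [i]$ works for $i-1$, then the first term contributes $\sum_{j \in J_{i-1}} a_j$ and the second contributes $\sum_{j \in J_{i-1}} a_{j+1} = \sum_{j \in J_{i-1} + 1} a_j$, so setting $J_i := J_{i-1} \,\triangle\, (J_{i-1} + 1)$ (symmetric difference) produces the required identity. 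To verify $1 \in J_i$: by induction $1 \in J_{i-1}$, but $1 \notin J_{i-1} + 1$ since every element of $J_{i-1} + 1$ is at least $2$. Symmetrically, by induction $i \in J_{i-1}$, so $i+1 \in J_{i-1} + 1$, while $i+1 \notin J_{i-1} \subseteq [i]$; hence $i+1 \in J_i$.

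There is really no obstacle here — the lemma is a clean exercise once the parity identity $|a-b| \equiv a+b \pmod 2$ is in hand. As a sanity check, the symmetric-difference recursion for $J_i$ reproduces Pascal's rule modulo $2$, so one in fact obtains the closed form
$$J_i \;=\; \{\,j+1 \,:\, 0 \le j \le i,\ \tbinom{i}{j} \text{ is odd}\,\},$$
and the requirement $1, i+1 \in J_i$ corresponds to $\binom{i}{0} = \binom{i}{i} = 1$ being odd. This closed form is not needed for the lemma, but it may be useful later in controlling the parity of iterates.
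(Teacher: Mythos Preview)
Your proof is correct and essentially identical to the paper's: both induct on $i$ via $|a-b|\equiv a+b\pmod 2$, set $J_i = J_{i-1}\,\triangle\,(J_{i-1}+1)$, and check that $1$ and $i+1$ survive the symmetric difference. Your additional closed-form remark identifying $J_i$ with the odd binomial coefficients is a nice observation but is not used in the paper.
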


\begin{proof}
We induct on $i$. For $i=1$, the result follows from $|a_1-a_2| \equiv a_1+a_2 \modd 2$. Assume $i \ge 2$ and the result is true for $i-1$. Note that $f_i(a_1,\dots,a_{i+1}) \equiv |f_{i-1}(a_1,\dots,a_i)-f_{i-1}(a_2,\dots,a_{i+1})| \equiv f_{i-1}(a_1,\dots,a_i)+f_{i-1}(a_2,\dots,a_{i+1}) \equiv$

\noindent $\sum_{j \in J_{i-1}} a_j+\sum_{j \in J_{i-1}} a_{j+1} \equiv \sum_{j \in J_{i-1} \triangle (J_{i-1}+1)} a_j \modd{2}$. By induction, $J_{i-1}$ contains $1$ and $i$, and so $J_i := J_{i-1} \triangle (J_{i-1}+1)$ contains $1$ and $i+1$, as desired.
\end{proof}

\vspace{1.5mm}

We take a moment to note some useful corollaries of Lemma \ref{mod2} which tells us that the parity of what $a_1,\dots,a_{i+1}$ ultimately iterate to depends linearly on each of the parities of $a_1$ and $a_{i+1}$. For example, let $a_1,\dots,a_{i+1}$ be drawn independently, uniformly at random from $\{0,\dots,C-1\}$. Then, the probability $a_1,\dots,a_{i+1}$ ultimately iterate to an even integer is between $\frac{1}{3}$ and $\frac{2}{3}$. And the probability that, for $j = i/2$ say, all of $f_j(a_t,\dots,a_{t+j})$ are even, for $t=1,\dots,i/2$, is exponentially small in $i/2$.  

\vs

Let $[C]_0 = \{0,\dots,C-1\}$. 

\vs

The following proposition shows that $0$s are not too rare, which will be useful in conjuction with Lemma \ref{inverseiterates}. Before the proof, we introduce some notation (for a given $C$ and $i$). Define $i_0 = i$ and $i_{j+1} = \lfloor \frac{i_j}{100C^2} \rfloor$ for $0 \le j \le C-3$. For $1 \le j \le C-2$, let $E_j$ denote the event that after $i-i_{j-1}$ iterations there's a $\{0,C-j\}$-block of length (at least) $i_{j-1}-i_j$. For example, $E_1$ is the event that after $0$ iterations, there's a $\{0,C-1\}$-block of length $i-i_1$, and $E_2$ is the event that after $i-i_1$ iterations, there's a $\{0,C-2\}$-block of length $i_1-i_2$.   

\begin{proposition}\label{lowerbound}
For any $C \ge 2$ and any $i \ge (200C^2)^{2C}$, if $a_1,\dots,a_i$ are chosen independently and uniformly at random from $\{0,\dots,C-1\}$, then the probability they ultimately iterate to $0$ is at least $\frac{1}{200C^2}$. 
\end{proposition}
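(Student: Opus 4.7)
The plan is to combine the parity constraint from Lemma \ref{mod2} with the ``max drops'' mechanism of Lemma \ref{blockdestruction}, using the events $E_1,\ldots,E_{C-2}$ just introduced. On $\bigcap_j E_j^c$, iteratively applying Lemma \ref{blockdestruction} forces the maximum to drop by one at each stage (each drop costing at most $i_{j-1}-i_j$ further iterations), so that by time $i-i_{C-2}$ every entry lies in $\{0,1\}$ and hence $f_{i-1}(a_1,\ldots,a_i)\in\{0,1\}$. Independently, Lemma \ref{mod2} gives $f_{i-1}\equiv\sum_{k\in J_{i-1}}a_k\pmod 2$ with $\{1,i\}\subseteq J_{i-1}$, and since a uniform element of $\{0,\ldots,C-1\}$ takes any given parity with probability at least $1/3$, conditioning on the coordinates other than $a_1$ yields $P(f_{i-1}\text{ even})\ge 1/3$. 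Inclusion--exclusion then gives
\[ P(f_{i-1}=0) \;\ge\; P(f_{i-1}\in\{0,1\})+P(f_{i-1}\text{ even})-1 \;\ge\; \tfrac{1}{3}-\textstyle\sum_{j=1}^{C-2}P(E_j), \]
so the proof reduces to showing $\sum_j P(E_j)\le \tfrac{1}{3}-\tfrac{1}{200C^2}$.

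The bound on $P(E_1)$ is elementary: a union bound over starting positions gives $P(E_1)\le i\cdot(2/C)^{i-i_1}$, which is exponentially small in $i$. For $j\ge 2$, I would apply Lemma \ref{inverseiterates} to the $(C-j)\Z$-block demanded by $E_j$. Case (a) --- an initial $(C-j)\Z$-block of length $i-i_j$ --- is again exponentially small by the same kind of union bound, since each entry lies in $(C-j)\Z$ with probability at most $\tfrac{1}{2}+\tfrac{1}{C}$. Case (b) produces some $i'\in[0,i-i_{j-1}-1]$ and some position $t$ for which $f_{i'}(a_{t+s},\ldots,a_{t+s+i'})\neq 0$ for every $s=0,\ldots,\ell_{i'}-1$, where $\ell_{i'}:=i-i_j-i'$. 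Subsampling every $(i'+1)$st index inside the block makes the chosen entries independent, yielding the bound $(1-p_{i'})^{\lfloor\ell_{i'}/(i'+1)\rfloor}$, where $p_{i'}:=P(f_{i'}(a_1,\ldots,a_{i'+1})=0)$.

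I would lower bound $p_{i'}$ in two regimes: by strong induction on $i$ applied to the proposition itself, $p_{i'}\ge 1/(200C^2)$ whenever $i'+1\ge(200C^2)^{2C}$; for smaller $i'$, the trivial bound $p_{i'}\ge C^{-i'}$ (from constant inputs), combined with the enormous value of $\ell_{i'}/(i'+1)$, renders the contribution negligible. The main obstacle I anticipate is the large-$i'$ regime in which $\ell_{i'}$ is comparable to (or smaller than) $i'+1$, where the subsampling trick yields too few independent entries to drive the bound down. To close this tail I would exploit the stronger observation that a no-$0$ block of length $\ell_{i'}$ accounts for all but at most $i_j$ of the entries of the time-$i'$ sequence, and hence forces the total number of zeros at time $i'$ to be at most $i_j$; since by the inductive bound on $p_{i'}$ the expected number of zeros is $\ge i_{j-1}/(200C^2)\ge i_j/2$, an $m$-dependent concentration inequality (with $m=i'$) --- combined if necessary with a slight upward tuning of the ratio $100C^2$ in the definition of the $i_j$'s to provide extra slack --- should make this event sufficiently improbable to close the bound.
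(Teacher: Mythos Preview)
Your reduction to bounding $\sum_j P(E_j)$ is clean, and the treatment of $E_1$, of case~(a), and of the small-$i'$ part of case~(b) is fine. The genuine gap is precisely where you flag it: the large-$i'$ tail of case~(b). The proposed fix via $m$-dependent concentration cannot close it. At time $i'$ the entries are $i'$-dependent, and $i'$ ranges up to $i-i_{j-1}-1$; for $j\ge 2$ this means $i'$ can be essentially $i$, while the sequence at time $i'$ has only $i-i'\ge i_{j-1}$ entries. In the extreme case $i'=i-i_{j-1}-1$ the dependence window $i'$ exceeds the sequence length $i-i'\approx i_{j-1}$ by a factor of roughly $(100C^2)^{j-1}$, so the effective number of independent samples is below~$1$ and any $m$-dependent bound is vacuous. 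Tuning the ratio $100C^2$ upward only repairs the expectation (making $\mathbb E[\#\text{zeros}]\gg i_j$); it does nothing for the variance, since the factor controlling concentration is $(i-i')/i'$, independent of how the $i_j$ are spaced. A Chebyshev computation makes this concrete: with $\operatorname{Var}\lesssim (i-i')\cdot i'$ and $\mathbb E[\#\text{zeros}]\asymp (i-i')/(200C^2)$, the resulting tail bound at $i'=i-i_{j-1}-1$ is of order $(200C^2)^2\cdot i/i_{j-1}\ge (200C^2)^2\cdot 100C^2$, which is useless.

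The paper does not try to show $P(E_j)$ is small. It argues by contradiction: if $P(f_{i-1}=0)$ is tiny then, conditioning on even parity, some $E_j$ (with $C-j$ odd, $j\ge 2$) must hold with probability at least $0.97/(C-2)$. It then interprets the long $\{0,C-j\}$-block as a long all-red walk on the de~Bruijn graph $[C]_0^{\,i-i_{j-1}}$ and invokes Proposition~\ref{bootstrap} to stretch the walk length from $i_{j-1}-2i_j$ past $i_{j-1}$ while keeping probability $\gtrsim 1/C^2$. Once the block covers the entire time-$(i-i_{j-1})$ sequence, a short shifting argument reinserts the parity constraint and forces $f_{i-1}=0$ with probability at least $1/(160C^2)$, contradicting the assumption. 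So the missing ingredient is exactly the bootstrap Proposition~\ref{bootstrap}; the inductive/concentration route you outline does not supply a substitute for it.
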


\begin{proof}
Fix $C \ge 2$ and $i \ge (200C^2)^{2C}$. If $C=2$, then Lemma \ref{mod2} gives the result, so assume $C \ge 3$. We may suppose that the desired probability is at most $0.01$. Let $\mc{B}_0$ denote all $i$-tuples in $[C]_0^i$ that ultimately iterate to something $0$ mod $2$; we say ``conditional probability" when speaking of the conditional probability that $\mc{B}_0$ induces. Then, by Lemma \ref{mod2}, the conditional probability of ultimately iterating to $0$ is at most $0.03$, and so the conditional probability of not having only $0$s and $1$s after some iteration is at least $0.97$. 

\vs

Therefore, with conditional probability at least $0.97$, some $E_j$ occurs. Indeed, otherwise, repeated use of Lemma \ref{blockdestruction} shows that after $i-i_{C-2}$ iterations, everything is a $0$ or a $1$: after $i-i_1$ iterations, there are no more $(C-1)$s and thus no $(C-1)$s ever again; after $i-i_2$ iterations, there are no more $(C-2)$s and thus no $(C-2)$s ever again, etc.. 

\vs

Therefore, by the pigeonhole principle, there is some $j$, $1 \le j \le C-2$, such that $E_j$ occurs with conditional probability at least $\frac{0.97}{C-2}$. Clearly $j$ cannot be $1$, since we have the uniform distribution after $0$ iterations. Also, $j$ must be such that $C-j$ is odd, since by Lemma \ref{mod2}, the probability of having $2i_j$ evens in a row is at most $(\frac{2}{3})^{2i_j} \le (\frac{2}{3})^{2(200C^2)^C}$ (since, as is easy to verify, $i_j \ge i_{C-2} \ge (200C^2)^C$ for each $j$). Since after $i-i_{j-1}$ iterations, there are only $i_{j-1}$ indices, a block of length $i_{j-1}-i_j$ must contain the block $[i_j+1,i_{j-1}-i_j]$ (see figure $1$). So, with conditional probability at least $\frac{0.97}{C-2}$, all indices $i_j+\Delta$, for $1 \le \Delta \le i_{j-1}-2i_j$, will be a $0$ or $C-j$. 

\vs

Let $a_1,\dots,a_i$ be the initial sequence, and note that, after $i-i_{j-1}$ iterations, none of the indices $i_j+\Delta$ depend on $a_1$ or $a_i$ (only the first and last indices do). Therefore, by Lemma \ref{mod2}, with (unconditional) probability at least $\frac{0.30}{C-2}$, all $i_j+\Delta$ will be $0$ or $C-j$. Now, note that after $\ol{i} := i-i_{j-1}$ iterations, the integer at any index $r$ is equal to $f_{\ol{i}}(a_r, a_{r+1},\dots,a_{r+\ol{i}})$. 

\vspace{-4.8mm}

\begin{center} 
\begin{tikzpicture}

\node[vertex] (0) at (0,\height) {};
\node[vertex] (ej) at (0.4,\height) {};
\node[vertex] (1-ej) at (2.1,\height) {};
\node[vertex] (1) at (2.5,\height) {};
\node[vertex] (i-1) at (7.5,\height) {};
\node[vertex] (ej+i-1) at (7.9,\height) {};
\node[vertex] (1-ej+i-1) at (9.6,\height) {};
\node[vertex] (1+i-1) at (10,\height) {};
\draw[thick] (0) -- (1+i-1);

\node[vertex] (0') at (3.75,.3) {};
\node[vertex] (ej') at (4.15,.3) {};
\node[vertex] (1-ej') at (5.85,.3) {};
\node[vertex] (1') at (6.25,.3) {};
\draw[thick] (0') -- (1');
\draw[line width=.5mm, red] (ej') -- (1-ej');

\draw[thick] (0') -- (0);
\draw[thick] (0') -- (i-1);
\draw[thick] (ej') -- (ej);
\draw[thick] (ej') -- (ej+i-1);
\draw[thick] (1-ej') -- (1-ej);
\draw[thick] (1-ej') -- (1-ej+i-1);
\draw[thick] (1') -- (1);
\draw[thick] (1') -- (1+i-1);

\node at (0,\heighty) {{\tiny $0$}};
\node at (0.5,\heighty) {{\tiny $i_j$}};
\node at (1.7,\heighty) {{\tiny $i_{j-1}-i_j$}};
\node at (2.9,\heighty) {{\tiny $i_{j-1}$}};
\node at (7, \heighty) {{\tiny $i-i_{j-1}$}};
\node at (8.2, 5.7) {{\tiny $i-(i_{j-1}-i_j)$}};
\node at (9.3, \heighty) {{\tiny $i-i_j$}};
\node at (10, \heighty) {{\tiny $i$}};

\node at (3.75,0) {{\tiny $0$}};
\node at (4.25,0) {{\tiny $i_j$}};
\node at (5.5,0) {{\tiny $i_{j-1}-i_j$}};
\node at (6.6,0) {{\tiny $i_{j-1}$}};

\draw[thick] (ej+i-1) -- (7.9,5.5);

\end{tikzpicture}

{\tiny Figure 1: Indicates which initial indices (in $[i]$) a particular index after $\ol{i}$ iterations depends on.}
\end{center}

Define a (regular) directed graph on $[C]_0^{\ol{i}}$ by $(x_1,\dots,x_{\ol{i}}) \to (x_2,\dots,x_{\ol{i}},y)$ for any $x_1,\dots,x_{\ol{i}},y \in [C]_0$. Color a tuple $(x_1,\dots,x_{\ol{i}}) \in [C]_0^{\ol{i}}$ ``red" if and only if it ultimately iterates to $0$ or $C-j$. The fact that, with probability at least $\frac{0.30}{C-2}$, all $f_{\ol{i}}(a_r, a_{r+1},\dots,a_{r+\ol{i}})$, for $i_j+1 \le r \le (1-\ep_j)\ep_1\dots\ep_{j-1}i$, are $0$ or $C-j$ corresponds exactly to: with probability at least $\frac{0.30}{C-2}$, a simple random walk in $[C]_0^{\ol{i}}$ of length $L := i_{j-1}-2i_j$ consists entirely of red vertices. 

\vs

Hence, by Proposition \ref{bootstrap}, with probability at least $\frac{1}{20C^2}$ a simple random walk of length\footnote{To be light on notation, we suppress ceiling and floor functions in the rest of this section.} $(1+\frac{1}{20C^2})L$ consists entirely of red vertices. Now, $(1+\frac{1}{20C^2})L \ge (1+\frac{1}{40C^2})i_{j-1}$ since it is equivalent to $\frac{1}{40C^2}i_{j-1} \ge (2+\frac{1}{10C^2})i_j$, which is true since $i_j \le \frac{i_{j-1}}{100C^2}$. We have thus shown that, if $a_1,\dots,a_{(1+\frac{1}{40C^2})i_{j-1}+\ol{i}}$ are chosen independently and uniformly at random from $[C]_0$, then with probability at least $\frac{1}{20C^2}$, all $f_{\ol{i}}(a_r,\dots,a_{r+\ol{i}})$ for $1 \le r \le (1+\frac{1}{40C^2})i_{j-1}$ are either $0$ or $C-j$. 

\vs

We're nearly done, as $(f_{\ol{i}}(a_r,\dots,a_{r+\ol{i}}))_{1 \le r \le L'}$ is the whole sequence after $\ol{i}$ iterations; since $C-j$ is odd, we just need to additionally ensure that the ultimate iterate is even. Specifically, we argue as follows. 

\vs 

We now deduce that, for $L' := i_{j-1}$, if $a_1,\dots,a_i$ are chosen independently and uniformly at random from $[C]_0$, then with probability at least $\frac{1}{160C^2}$, they ultimately iterate to something $0$ mod $2$ and each $f_{\ol{i}}(a_r,\dots,a_{r+\ol{i}})$, for $1 \le r \le L'$, are either $0$ or $C-j$. Let $\delta = \frac{1}{40C^2}$. By Lemma \ref{mod2}, the proportion of walks $(X_1,\dots,X_{(1+\delta)L'})$ in $[C]_0^{\ol{i}}$ of length $(1+\delta)L'$ that have at most $\frac{\delta L'}{4}$ values of $j \in [\delta L']$ with\footnote{Here we have abused notation, by associating the $i$-tuple that $X_{j+1},\dots,X_{j+L'}$ form with $(X_{j+1},\dots,X_{j+L'})$.} $(X_{j+1},X_{j+2},\dots,X_{j+L'}) \in \mc{B}_0$ is at most\footnote{The inequality following this footnote follows from the well known ${n \choose k} \le (\frac{en}{k})^k$, giving $\frac{\delta L'}{4}{\delta L' \choose \delta L'/4}2^{-\delta L'} \le \frac{\delta L'}{4}(\frac{e\delta L'}{\delta L'/4})^{\delta L'/4}2^{-\delta L'} < \frac{\delta L'}{4}(0.91)^{\delta L'}$. Note $\delta L' \ge \frac{1}{40C^2}(200C^2)^C$.} $\frac{\delta L'}{4}{\delta L' \choose \delta L'/4}2^{-\delta L'} \le \frac{1}{40C^2}$. Therefore, since the proportion of walks $(X_1,\dots,X_{(1+\delta)L'})$ with $X_1,\dots,X_{(1+\delta)L'}$ all red is at least $\frac{1}{20C^2}$, if we let $\mc{A}$ denote the walks $(X_1,\dots,X_{(1+\delta)L'})$ such that $X_1,\dots,X_{(1+\delta)L'}$ are all red and such that there are at least $\frac{\delta L'}{4}$ values of $j$ with $(X_{j+1},X_{j+2},\dots,X_{j+L'}) \in \mc{B}_0$, then the density of $\mc{A}$ is at least $\frac{1}{40C^2}$. So on one hand, $$\sum_{(X_1,\dots,X_{(1+\delta)L'}) \in \mc{A}} \hs \sum_{j=1}^{\delta L'} 1_{(X_{j+1},\dots,X_{j+L'}) \in \mc{B}_0} \ge \frac{\delta L'}{4}\frac{1}{40C^2}C^{\ol{i}}C^{(1+\delta)L'-1},$$ while on another hand, \begin{align*} \sum_{(X_1,\dots,X_{(1+\delta)L'}) \in \mc{A}} \hs \sum_{j=1}^{\delta L'} 1_{(X_{j+1},\dots,X_{j+L'}) \in \mc{B}_0} &= \sum_{j=1}^{\delta L'}\sum_{(X_{j+1},\dots,X_{j+L'}) \in \mc{B}_0} \sum_{\substack{X_1,\dots,X_j,X_{j+L'+1},\dots,X_{(1+\delta)L'} \\ (X_1,\dots,X_{(1+\delta)L'}) \in \mc{A}}} 1 \\ & \le \sum_{j=1}^{\delta L'}\sum_{(X_{j+1},\dots,X_{j+L'}) \in \mc{B}_0} C^{\delta L'} 1_{X_{j+1},\dots,X_{j+L'} \text{ all red}} \\ &= \delta L' C^{\delta L'} \sum_{(X_1,\dots,X_{L'}) \in \mc{B}_0} 1_{X_1,\dots,X_{L'} \text{ all red}}.\end{align*} We deduce that $$\sum_{(X_1,\dots,X_{L'}) \in \mc{B}_0} 1_{X_l,\dots, X_{L'} \text{ all red}} \ge \frac{1}{160C^2}C^{\ol{i}}C^{L'-1},$$ which is what we wanted to deduce. 
\end{proof}

\vs

\begin{corollary}\label{0bound}
For any $C \ge 2$ and any $i \ge 1$, if $a_1,\dots,a_i$ are chosen independently and uniformly at random from $\{0,\dots,C-1\}$, then the probability they ultimately iterate to $0$ is at least $(\frac{1}{C})^{(200C^2)^{2C}}$.
\end{corollary}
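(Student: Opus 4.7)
The plan is to reduce to two cases according to whether $i$ is above or below the threshold $(200C^2)^{2C}$ appearing in Proposition \ref{lowerbound}.

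For $i \ge (200C^2)^{2C}$, Proposition \ref{lowerbound} gives the stronger bound $\frac{1}{200C^2}$ on the probability of ultimately iterating to $0$, and one just needs the trivial observation that $\frac{1}{200C^2} \ge (\frac{1}{C})^{(200C^2)^{2C}}$ for $C \ge 2$, which is obvious since the right-hand side is doubly exponentially small in $C$.

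For $i < (200C^2)^{2C}$, Proposition \ref{lowerbound} is unavailable, but here we have the ``degenerate'' lower bound coming from the event that every $a_j$ equals $0$: this event has probability exactly $(\frac{1}{C})^i$, and on this event the sequence obviously ultimately iterates to $0$. Since $i < (200C^2)^{2C}$, this probability is at least $(\frac{1}{C})^{(200C^2)^{2C}}$, matching the claimed bound.

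Combining the two cases gives the corollary. There is no real obstacle here; the statement is essentially Proposition \ref{lowerbound} rephrased to handle small $i$ uniformly by absorbing the trivial ``all equal'' lower bound into a single formula.
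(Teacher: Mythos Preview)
Your proof is correct and follows essentially the same approach as the paper: split into the cases $i \ge (200C^2)^{2C}$ (invoke Proposition~\ref{lowerbound}) and $i < (200C^2)^{2C}$ (use the trivial all-zeros event). Nothing further to add.
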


\begin{proof}
For $i \ge (200C^2)^{2C}$, Proposition \ref{lowerbound} yields a lower bound of $\frac{1}{200C^2}$, and for $1 \le i < (200C^2)^{2C}$, we use the trivial lower bound coming from $a_j = 0$ for all $j$.
\end{proof}

\section{Finishing the Proof of Theorem \ref{main}}

We now finish the proof of Theorem \ref{main}, copied below for the reader's convenience. 

\setcounter{theorem}{1}
\begin{theorem}\label{main}
For $M$ large, for any $C$ with $2 \le C \le \frac{1}{100}\frac{\log\log M}{\log\log\log M}$, if we form an initial sequence of length $M$ by choosing numbers from $\{0,\dots,C-1\}$ independently and uniformly at random, then, with probability at least $1-e^{-e^{\sqrt[20]{\log M}}}$, after $e^{\sqrt[5]{\log M}}$ iterations of consecutive differencing, everything is a $0$ or $1$. 
\end{theorem}

\vspace{1mm}

Fix $M$ large and $C$ in the range $[3,\frac{1}{100}\frac{\log\log M}{\log\log\log M}]$ (the case $C=2$ is trivial). Let $E_1$ denote\footnote{To be light on notation, we suppress ceiling and floor functions in this section.} the event that after $0$ iterations, there is a $\{0,C-1\}$-block of length $R := e^{\sqrt[10]{\log M}}$. Let $E_2$ be the event that after $2R$ iterations, there is a $\{0,C-2\}$-block of length $R^2$. Let $E_3$ be the event that after $2R^2$ iterations, there is a $\{0,C-3\}$-block of length $R^3$. In general, for $2 \le j \le C-2$, $E_j$ is the event that after $2R^{j-1}$ iterations, there is a $\{0,C-j\}$-block of length $R^j$. Since $2R^{j-1} \ge 2R^{j-2}+R^{j-1}$ for $3 \le j \le C-1$, we see that, as before, by Lemma \ref{blockdestruction}, if no $E_j$ occurs, then after $2R^{C-2}$ iterations, everything is a $0$ or a $1$. Note that $2R^{C-2} \le e^{\sqrt[5]{\log M}}$, so it suffices to show that the probability that some $E_j$ occurs is at most $e^{-e^{\sqrt[20]{\log M}}}$. By the union bound, it suffices to show $\Pr(E_j) \le e^{-e^{\sqrt[13]{\log M}}}$, say, for each $1 \le j \le C-2$. 

\vs

Clearly, $\Pr(E_1) \le M(\frac{2}{3})^R \le e^{-e^{\sqrt[13]{\log M}}}$, so fix some $j$ with $2 \le j \le C-2$. By Lemma \ref{inverseiterates}, if $E_j$ occurs, either there is a $(C-j)\Z$-block of length $R^j$ in the initial sequence or there is a block of length $R^j$ in the first $2R^{j-1}-1$ iterations containing no $0$s. Once again, the first option holds with probability at most $M(\frac{2}{3})^{R^j} \le \frac{1}{2}e^{-e^{\sqrt[13]{\log M}}}$, so by the union bound, it suffices to show that for each $0 \le i \le 2R^{j-1}-1$, the probability that there is a block of length $L := R^j = e^{j\sqrt[10]{\log M}}$ without $0$s after $i$ iterations is at most $e^{-e^{\sqrt[12]{\log M}}}$, say. 

\vs

So fix some $i \in [0,2R^{j-1}-1]$. Let $b_1,\dots,b_{M-i}$ denote the sequence after $i$ iterations. Let's first focus on the block $b_1,\dots,b_L$. Say the initial sequence is $a_1,\dots,a_M$. Note that $b_{k(i+1)+1} = f_i(a_{k(i+1)+1},\dots,a_{(k+1)(i+1)})$ for $0 \le k \le \frac{1}{2}R-1$. Since $(\frac{1}{2}R-1)(i+1)+1 \le \frac{1}{2}R(i+1) \le L$ and the sets $\{a_{k(i+1)+1},\dots,a_{(k+1)(i+1)}\}$ are disjoint as $k$ ranges, by independence the probability that $b_1,\dots,b_L$ are all nonzero is at most $\left(1-(\frac{1}{C})^{(200C^2)^{2C}}\right)^{R/2}$ by Corollary \ref{0bound}. Using the standard $1-x \le e^{-x}$, we see  that $\left(1-(\frac{1}{C})^{(200C^2)^{2C}}\right)^{R/2} \le \exp\left(-\frac{R}{2}(\frac{1}{C})^{(200C^2)^{2C}}\right) \le \exp\left(-\frac{R}{2}e^{-(\log C)e^{5C\log C}}\right) \le \exp\left(-\frac{R}{2}e^{-(\log\log\log M)e^{\frac{1}{19}\log\log M}}\right) \le \exp\left(-\frac{R}{2}e^{-\sqrt[15]{\log M}}\right) \le \exp\left(-e^{\sqrt[11]{\log M}}\right)$. Therefore, by the union bound, the probability that there is some block of length $L$ after $i$ iterations containing no $0$s is at most $Me^{-e^{\sqrt[11]{\log M}}} \le e^{-e^{\sqrt[12]{\log M}}}$. The proof is thus complete. \qed

\section{Proof of Theorem \ref{truemain}}

In this section we deduce Theorem \ref{truemain} from Theorem \ref{main}. We start with a lemma. 

\vspace{1mm}

\begin{lemma}\label{truncated}
Take $M$ large. Let $f: [M] \to \{2,3,\dots,\lfloor \frac{1}{100}\frac{\log\log M}{\log\log\log M}\rfloor\}$ be an increasing function. Form a random initial sequence $b_1,\dots,b_M$ by choosing $b_m$ uniformly at random from $\{0,1,\dots,f(n)-1\}$, independently of the other $b_i$'s. Then, with probability at least $1-e^{-\frac{1}{20}\log^2 M}$, after $3\frac{M}{\log^2 M}$ iterations of consecutive differencing, everything is a $0$ or $1$. 
\end{lemma}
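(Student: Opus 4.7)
The plan is to reduce Lemma~\ref{truncated} to Theorem~\ref{main} by exploiting the piecewise-constant structure of $f$. Since $f$ is increasing into $\{2, \ldots, C\}$ for $C := \lfloor\tfrac{1}{100}\tfrac{\log\log M}{\log\log\log M}\rfloor$, I would partition $[M]$ into maximal intervals $I_1, \ldots, I_K$ (with $K \le C$) on which $f$ is constant, taking values $C_1 < C_2 < \cdots < C_K$. On $I_k$, the restricted sequence $(b_m)_{m \in I_k}$ is iid uniform on $\{0, \ldots, C_k-1\}$, which is exactly the setting of Theorem~\ref{main}. Writing $T := 3M/\log^2 M$, the value at position $r$ after $T$ iterations depends only on $b_r, \ldots, b_{r+T}$; in particular, when $[r, r+T] \subseteq I_k$, the $T$-iterate of the full sequence at $r$ coincides with the $T$-iterate of the $I_k$-subsequence at the corresponding position.

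For each ``good'' $I_k$, meaning $|I_k|$ is large enough that the hypothesis $C_k \le \tfrac{1}{100}\tfrac{\log\log|I_k|}{\log\log\log|I_k|}$ of Theorem~\ref{main} is satisfied, Theorem~\ref{main} gives that, with probability $\ge 1 - e^{-e^{\sqrt[20]{\log|I_k|}}}$, the $I_k$-subsequence becomes $\{0,1\}$-valued after $e^{\sqrt[5]{\log|I_k|}} \ll T$ iterations; and since $\{0,1\}$-valued sequences are preserved under differencing, this persists through the $T$-th iteration. A union bound over the at most $K \le C$ good intervals gives failure probability well under $e^{-\tfrac{1}{20}\log^2 M}$, and every position $r$ with $[r, r+T]$ contained in a good $I_k$ thereby has $T$-iterate in $\{0,1\}$.

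The remaining ``bad'' positions are of two types: those lying in intervals too short for Theorem~\ref{main} to apply (especially problematic for the top intervals, where a large $C_k$ demands $|I_k|$ comparable to $M$) and those whose window $[r, r+T]$ straddles a boundary between two consecutive intervals $I_k$ and $I_{k+1}$. With the threshold for ``good'' chosen carefully, their total count is $O(KT) + O(\text{short-interval lengths}) = o(M/\log M)$, so a per-bad-position failure bound of $e^{-\Theta(\log^2 M)}$ will suffice by a final union bound. For each such bad position, the entries $b_m$ in the window are independent, each uniform on $\{0, \ldots, f(m)-1\} \subseteq \{0, \ldots, C-1\}$, but with varying supports.

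The main obstacle is precisely obtaining this per-bad-position bound, which amounts to re-running Theorem~\ref{main}'s proof for non-iid-uniform sequences. Uniformity enters Theorem~\ref{main}'s proof only through (i) Lemma~\ref{mod2}'s consequence that the iterate's parity has bias bounded away from $0$ and $1$ --- which still holds in our setting, as each $b_m$'s parity is a Bernoulli with bias $\lfloor f(m)/2\rfloor/f(m) \in [1/3, 1/2]$ --- and (ii) Corollary~\ref{0bound}'s $(1/C)^{(200C^2)^{2C}}$ lower bound on $\Pr(b_1, \ldots, b_i \text{ iterate to } 0)$. Recovering an analog of (ii) requires adapting the bootstrap of Proposition~\ref{lowerbound}, which uses the regular random walk on $[C]_0^{\bar{\imath}}$ via Proposition~\ref{bootstrap}. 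In our varying-support setting the natural walk is on the non-regular product space $\prod_m [f(m)]_0$, and I expect the cleanest route is to generalize Proposition~\ref{bootstrap} to handle non-regular directed graphs via a modified Cauchy--Schwarz argument tailored to the (nearly-uniform) stationary distribution of this walk; the quantitative slack in the $1/20$ exponent of the target bound is generous enough to absorb the resulting constant-factor degradations. Inserting the resulting analog of Corollary~\ref{0bound} back into Theorem~\ref{main}'s proof on a locally chosen window containing $[r, r+T]$ then yields the required per-bad-position bound, and a final union bound completes the proof.
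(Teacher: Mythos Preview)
Your decomposition into constant-$f$ intervals and boundary regions matches the paper's starting point, but your treatment of the boundary regions has a real gap. You frame the needed extension of Proposition~\ref{bootstrap} as passing from regular to non-regular directed graphs, but the actual obstruction is that the relevant chain is \emph{time-inhomogeneous}: at step $r$ the fresh coordinate is drawn from $[f(r+\bar\imath)]_0$, so the transition law depends on $r$ and there is no fixed graph on which this is a simple random walk. Proposition~\ref{bootstrap} uses translation invariance essentially, in the identity $\Pr(X_{k_j+1},\dots,X_L\text{ all red}\mid X_{k_j}=v)=\Pr(X_{k_{j'}+1},\dots,X_{L+k_{j'}-k_j}\text{ all red}\mid X_{k_{j'}}=v)$, and once the law of the chain depends on the time index this step has no direct substitute; there is also no stationary distribution in the usual sense to be ``nearly uniform'' to. (The Section~6 remark about adapting Theorem~\ref{main} to other distributions concerns a single fixed distribution with full support on $[C]_0$; here the support varies with $m$ and is a strict subset on one side of each boundary, so the duplicate-vertex trick does not apply.)

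The paper sidesteps this entirely and never re-runs Theorem~\ref{main} or the bootstrap on the bad regions. After $e^{\sqrt[5]{\log M}}$ iterations, Theorem~\ref{main} makes all $C$-pure indices $\{0,1\}$-valued with the required probability; the non-pure indices sit in at most $C_M$ short intervals $B_m$, and an auxiliary combinatorial lemma (Lemma~\ref{intervals}) produces much wider intervals $J_m\supseteq B_m$ whose padding $J_m\setminus B_m$ is already all $0$s and $1$s. From this point the argument is elementary: iterate further and use Lemma~\ref{blockdestruction} to drop the maximum in (the growing neighbourhood of) $B_m$ by one at each stage, the only obstruction being a long $\{0,d\}$-block, which would force a long run of consecutive $0$s inside the $\{0,1\}$-padding; by Lemma~\ref{mod2} each padding entry is $0$ with probability in $[1/3,2/3]$ independently, so such a run has probability at most $2\cdot(1/2)^{\Theta(\log^2 M)}$. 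Repeating this $C_M-1$ times with geometrically growing block thresholds and union-bounding over $m$ and the stages gives the lemma. The key idea you are missing is that once the bad region is surrounded by $0$s and $1$s, block destruction plus parity already suffices, and no analogue of Corollary~\ref{0bound} for varying distributions is needed.
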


\vspace{1.5mm}

Before proving Lemma \ref{truncated}, let's prove Theorem \ref{truemain} assuming it. 

\vspace{1.5mm}

\begin{proof}[Proof of Theorem \ref{truemain}]
Let $A_M$ denote the event that after $M$ iterations, the first term is not a $1$. We wish to show that, with probability $1$, only finitely many $A_M$'s occur. By Borel-Cantelli, it suffices to show that for all $M$ large, the probability of $A_M$ occurring is at most $e^{-\frac{1}{30}\log^2 M}$. Note that $A_M$ is equivalent to $a_1,\dots,a_{M+1}$ not ultimately iterating to $1$. For $M$ large enough, by Lemma \ref{truncated}, with probability at least $1-e^{-\frac{1}{20}\log^2 M}$, after $3\frac{M}{\log^2 M}$ iterations of consecutive differencing beginning with initial sequence $u_2,\dots,u_M$, everything is a $0$ or $1$. Therefore, with probability at least $1-e^{-\frac{1}{20}\log^2 M}$, after $3\frac{M}{\log^2 M}$ iterations of consecutive differencing beginning with initial sequence $2u_2,\dots,2u_M$, everything is a $0$ or $2$. It follows that with probability at least $1-e^{-\frac{1}{20}\log^2 M}$, after $1+3\frac{M}{\log^2 M}$ iterations of consecutive differencing beginning with initial sequence $a_1,\dots,a_{M+1}$, the obtained sequence starts off with an odd number at most $\frac{1}{100}\frac{\log\log M}{\log\log\log M}$ followed by only $0$s and $2$s. By Lemma \ref{mod2}, with probability at least $1-e^{-\frac{1}{10}\log^2M}$, the second term of the sequence is congruent to $2 \modd 4$ at least $\frac{1}{3}\log^2 M$ times out of the $\log^2 M$ iterations following the $(1+3\frac{M}{\log^2 M})^{\text{th}}$ iteration. Therefore, with probability at least $1-e^{-\frac{1}{20}\log^2 M}-e^{-\frac{1}{10}\log^2 M} \ge 1-e^{-\frac{1}{30}\log^2 M}$, starting with $a_1,\dots,a_{M+1}$, after $1+3\frac{M}{\log^2 M}+\log^2 M$ iterations, the first term will be a $1$, and therefore will remain a $1$ all the way until the final (i.e., $M^{\text{th}}$) iteration, since everything else is a $0$ or $2$. 
\end{proof}

\vspace{1mm}

\begin{definition}
Let $a_1,\dots,a_{M+1}$ be non-negative integers. We say that an index $i \in [M+1]$ \textit{influenced} the index $j \in [M+1-t]$ after $t$ iterations if $0 \le i-j \le t$. Recall that $f_t(a_j,\dots,a_{j+t})$ is the value at index $j$ after $t$ iterations. 
\end{definition}

\vspace{1.5mm}

We finish by proving Lemma \ref{truncated}. The idea of the proof is as follows. By Theorem \ref{main}, the blocks on which $f$ is constant will become all $0$s and $1$s after not too many iterations. Although there are some indices that were influenced by indices where $f$ took different values, these indices are contained in not too many not too large intervals, so we can let all the $0$s and $1$s drop the values at these ``bad indices" with a few extra iterations.

\vs

We start by proving a lemma that allows us to isolate these ``bad indices". For an interval $I \sub \N$, let $L(I)$ and $R(I)$ denote its left and right endpoints, respectively. 

\vspace{1.5mm}

\begin{lemma}\label{intervals}
Suppose $M$ is large, and let $C_M$ be a positive integer with $C_M \le \log\log M$. Let $I_1,\dots,I_r \sub [M]$ be disjoint intervals with $r \le C_M$ and $|I_t| \le C_Me^{\sqrt[5]{\log M}}$ for each $t$. Then there are pairwise disjoint intervals $J_1,\dots,J_s \sub [M]$, each containing some $I_t$, such that the following two hold. \begin{itemize} \item For all $t$, $1 \le t \le r$, there is some $m$ with $I_t \sub J_m$. \item For any $m$, $1 \le m \le s$, if we let $B_m$ denote the smallest interval containing all of the $I_t$'s in $J_m$, then we have that either $L(B_m)-L(J_m) \ge (\log^2 M)^{C_M}|B_m|$ or $R(J_m)-R(B_m) \ge (\log^2 M)^{C_M}|B_m|$, with both being true if $J_m$ contains neither $1$ nor $M$.\end{itemize}
\end{lemma}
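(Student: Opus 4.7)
The plan is to construct the $J_m$'s by greedy merging of the $I_t$'s until clusters are sufficiently separated, then padding each cluster out by the required buffer. Write $K := (\log^2 M)^{C_M}$ for the required buffer factor. Sort the $I_t$ left to right and initialize clusters $B_1, \dots, B_r$ as the convex hulls $B_t := I_t$. Run the following merge procedure: while there exist consecutive $B_m, B_{m+1}$ with $L(B_{m+1}) - R(B_m) \le 2K(|B_m|+|B_{m+1}|)$, replace the two by the single cluster $[L(B_m), R(B_{m+1})]$. Each merge strictly decreases the number of clusters, so the procedure terminates after at most $r-1$ steps, leaving clusters $B_1 < \dots < B_s$ whose consecutive gaps satisfy $L(B_{m+1}) - R(B_m) > 2K(|B_m|+|B_{m+1}|)$. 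By construction every $I_t$ lies in some $B_m$.

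Next I would define the $J_m$'s by padding each $B_m$ symmetrically by $K|B_m|$ on each side and clipping to $[1,M]$:
\[
J_m := [\max(1, L(B_m) - K|B_m|),\ \min(M, R(B_m) + K|B_m|)].
\]
The containment $I_t \subseteq J_m$ is immediate. The separation after merging gives
\[
L(J_{m+1}) - R(J_m) \ge L(B_{m+1}) - R(B_m) - K(|B_m|+|B_{m+1}|) > K(|B_m|+|B_{m+1}|) > 0,
\]
so the $J_m$'s are pairwise disjoint. When $J_m$ contains neither $1$ nor $M$, no clipping takes place and both buffer inequalities hold with equality $K|B_m|$. When $J_m$ is clipped only at $1$ (resp.\ only at $M$), the unclipped side still has buffer exactly $K|B_m|$, which is all the lemma requires in the boundary case.

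The one substantive check, which I expect to be the main obstacle, is that the clusters $B_m$ and their $K|B_m|$-extensions fit comfortably inside $[1,M]$, so that the clipping only happens at the actual endpoints $1$ and $M$. Each merge expands a cluster by a factor of at most $2K+1$, since the newly absorbed gap is at most $2K(|B_m|+|B_{m+1}|)$; after at most $r-1 \le C_M$ merges we therefore have
\[
|B_m| \le (2K+1)^{C_M}\cdot \max_t |I_t| \le (\log^2 M)^{2C_M^2}\cdot C_M e^{\sqrt[5]{\log M}}.
\]
Since $C_M \le \log\log M$, the prefactor is $\exp(O((\log\log M)^3))$, so $K|B_m| \le e^{O((\log\log M)^3)}\, e^{\sqrt[5]{\log M}}$, which is negligible compared to $M$ for $M$ large. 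Hence the only way the clipping is triggered is if $B_m$ is genuinely within the buffer distance of $1$ or $M$, which is exactly the case carved out by the lemma. All other conditions are immediate from the construction, completing the plan.
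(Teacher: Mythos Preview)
Your argument is correct and follows essentially the same strategy as the paper: greedily merge clusters until they are well separated, bound the geometric growth of cluster diameters over at most $r-1\le C_M$ merges, and use $C_M\le \log\log M$ to check that the final padded intervals still fit inside $[M]$. The only cosmetic difference is that the paper pads first (defining $J(A)$ for each current cluster) and merges whenever two padded intervals intersect, whereas you merge bare clusters by a gap threshold and pad at the end; the two formulations are equivalent. One small point worth making explicit: the size bound you derive also rules out the possibility that a single $J_m$ gets clipped at \emph{both} ends, so the ``either/or'' clause of the lemma is never vacuous.
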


\begin{proof}
For a subset $A$ of $[r]$, let $B_A$ denote the smallest interval containing $\cup_{t \in A} I_t$, and let $J(A)$ denote the smallest interval containing $\cup_{t \in A} I_t$ such that either $L(B_A)-L(J(A)) \ge (\log^2 M)^{C_M}|B_A|$ or $R(J(A))-R(B_A) \ge  (\log^2 M)^{C_M}|B_A|$, with both being true if $J(A)$ contains neither $1$ nor $M$; if no such interval exists, we let $J(A) = \emptyset$. Let $\mathcal{C}_0 = \{J(\{t\}) : 1 \le t \le r\}$. For $i \ge 0$, if $\mathcal{C}_i$ contains two intervals $J(A_1),J(A_2)$ that intersect, we define $\mathcal{C}_{i+1}$ to be the same as $\mathcal{C}_i$, except we replace $J(A_1)$ and $J(A_2)$ with $J(A_1\cup A_2)$ ($\mathcal{C}_{i+1}$ thus could depend on the choice of intersecting intervals). Say $\mathcal{C}_0,\dots,\mathcal{C}_{k-1}$ are the defined collections. It is clear that $k \le r$ and that if each element of $\mathcal{C}_{k-1}$ is non-empty, then the elements of $\mathcal{C}_{k-1}$ satisfy the conditions of Lemma \ref{intervals}. The largest diameter of an interval in $\mathcal{C}_0$ is at most $(2(\log^2 M)^{C_M}+1)C_Me^{\sqrt[5]{\log M}} \le 3(\log^2 M)^{C_M}C_Me^{\sqrt[5]{\log M}}$. If $J(A_1)$ and $J(A_2)$ each have diameter at most $D$ and intersect, then the diameter of $J(A_1\cup A_2)$ is at most $(2(\log^2 M)^{C_M}+1)(2D) \le 6(\log^2 M)^{C_M}D$. Therefore, each interval in any $\mathcal{C}_{i-1}$ has diameter at most $6^{i-1}(\log^2 M)^{(i-1)C_M}3(\log^2 M)^{C_M}C_Me^{\sqrt[5]{\log M}} \le 6^r(\log^2 M)^{rC_M} C_Me^{\sqrt[5]{\log M}} \le e^{\sqrt[4]{\log M}}$. To finish the proof, it just remains to note that $J(A) \not = \emptyset$ if the diameter of $\cup_{t \in A} I_t$ is at most $e^{\sqrt[4]{\log M}}$. 
\end{proof}

\vs

\begin{proof}[Proof of Lemma \ref{truncated}]
Do $e^{\sqrt[5]{\log M}}$ iterations of consecutive differencing. For $2 \le C \le \frac{1}{100}\frac{\log\log M}{\log\log\log M} =: C_M$, we say that an index $j$ is \textit{$C$-pure} if $f$ took the value $C$ at all indices in the initial sequence that influenced $j$ (after $e^{\sqrt[5]{\log M}}$ iterations). Let $I$ denote the indices that are not $C$-pure for any $C$. Write $I = \sqcup_{t=1}^r I_t$ as a disjoint union of intervals with $r$ minimal. Clearly $r \le C_M$. Also, crudely, $|I_t| \le C_Me^{\sqrt[5]{\log M}}$ for each $t$. 

\vs

Let $J_1,\dots,J_s$ be the intervals guaranteed\footnote{We are applying Lemma \ref{intervals} with $M-e^{\sqrt[5]{\log M}}$ instead of $M$, but all bounds are essentially the same.} by Lemma \ref{intervals}, and let $B_1,\dots,B_s$ be as in Lemma \ref{intervals}. For any $C$, by\footnote{As stated, Theorem \ref{main} only applies to initial sequences of length $M$. However, given any shorter initial sequence, we can independently add elements uniformly chosen from $\{0,\dots,C-1\}$ to obtain a sequence of length $M$, then do $e^{\sqrt[5]{\log M}}$ iterations, and then truncate the sequence to keep only indices influenced by the original initial sequence.} Theorem \ref{main} applied to the (interval of) $C$-pure indices, the probability that all $C$-pure indices are $0$ or $1$ is at least $1-e^{-e^{\sqrt[20]{\log M}}}$, and therefore the probability that all indices that are $C$-pure for some $C$ are $0$ or $1$ is at least $1-C_Me^{-e^{\sqrt[20]{\log M}}} \ge 1-e^{-\sqrt[21]{\log M}}$. In particular, with probability at least $1-e^{-\sqrt[21]{\log M}}$, all indices in $\cup_{m=1}^s (J_m\setminus B_m)$ are $0$ or $1$; we from here on condition on this being the case. For $1 \le m \le s$ and $1 \le j \le C_M-1$, let $J_m^j$ denote the interval (of length $|J_m|-2(\log^2 M)^j |B_m|$) whose indices after $2(\log^2 M)^j |B_m|$ iterations past the $e^{\sqrt[5]{\log M}}$th are influenced by indices only in $J_m$, and let $B_m^j$ denote the interval (of length $|B_m|+2(\log^2 M)^j |B_m|$) whose indices after $2(\log^2 M)^j |B_m|$ iterations past the $e^{\sqrt[5]{\log M}}$th are influenced by at least one index in $B_m$. Note that Lemma \ref{intervals} implies $B_m^j \subseteq J_m^j$ for each $1 \le j \le C_M-1$ (since $2(\log^2 M)^{C_M-1}|B_m| \le (\log^2 M)^{C_M}|B_m|$). 

\vs

For $1 \le m \le s$, let $E_m^0$ denote the event that there is a $\{0,C_M\}$-block in $J_m$ of length $(\log^2 M)|B_m|$ containing a $C_M$. For $1 \le m \le s$ and $1 \le j \le C_M-2$, let $E_m^j$ denote the event that, after $2(\log^2 M)^j |B_m|$ iterations (past the $e^{\sqrt[5]{\log M}}$th), there is a $\{0,C_M-j\}$-block in $J_m^j$ of length $(\log^2M)^{j+1}|B_m|$ containing a $C_M-j$. Fix $m$ with $1 \le m \le s$. As in the proofs of Proposition \ref{lowerbound} and Theorem \ref{main}, since $2(\log^2 M)^{i+1} |B_m| \ge (\log^2 M)^{i+1}|B_m|+2(\log^2 M)^i|B_m|$, if none of $E_m^0, E_m^1,\dots,E_m^{C_M-2}$ occur, then after $2(\log^2 M)^{C_M-1}$ iterations, the largest number in $J_m^{C_M-1}$ is a $1$. 

\vs

Note that any $C_M$'s in $J_m$ lie in $B_m$, so by Lemma \ref{mod2}, the probability that $E_m^0$ occurs is at most $2(\frac{1}{2})^{\frac{1}{2}\log^2 M}$, since either to the left or to the right of $B_m$ must be $\frac{1}{2}\log^2M$ consecutive $0$s. Similarly, the length of the longest $\{0,C_M-j\}$-block in $J_m^j$ is at most the whole of $B_m^j$ and $0$s surrounding it, so the probability $E_m^j$ occurs is at most $2(\frac{1}{2})^{\frac{1}{4}\log^2 M}$. Therefore, the probability that at least one of $E_m^0,\dots,E_m^{C_M-2}$ occurs is at most $2(\frac{1}{2})^{\frac{1}{2}\log^2 M}+(C_M-2)2(\frac{1}{2})^{\frac{1}{4}\log^2 M} \le e^{-\frac{1}{10}\log^2 M}$. Since $B_m^{C_M-1} \subseteq J_m^{C_M-1}$, if none of $E_m^0,\dots,E_m^{C_m-2}$ occur, then the elements of (the growing) $B_m$ became $0$ and $1$ quickly enough to not affect anything outside of (the shrinking) $J_m$. In particular, if none of $E_m^0,\dots,E_m^{C_M-2}$ occur for any $m$ (i.e. for each $m$, none occur), then\footnote{It is clear from Lemma \ref{intervals} that $|B_m| \le \frac{M}{(\log^2 M)^{C_M}}$ for each $m$.} after $2(\log^2 M)^{C_M-1}\max_{1 \le m \le s} |B_m| \le 2\frac{M}{\log^2 M}$ iterations past the $e^{\sqrt[5]{\log M}}$th, everything is a $0$ or $1$. Since the probability at least one $E_m^j$ (over all $j,m$) occurs is at most $se^{-\frac{1}{10}\log^2 M} \le e^{-\frac{1}{20}\log^2 M}$, Lemma \ref{truncated} is established. 
\end{proof}

\section{Additional Mathematical Remarks}

The proof of Theorem \ref{main} can be relatively easily adapted to handle any distribution (not just the uniform distribution) on $\{0,\dots,C-1\}$ that gives not too large, positive weight to each of $0,\dots,C-1$ (one should create duplicate vertices in $[C]_0^i$ so that the obtained simple random walk models this different probability distribution). 

\vs

In Theorem \ref{main} we did not try to optimize $e^{-e^{\sqrt[20]{\log M}}}$ nor $e^{\sqrt[5]{\log M}}$. A proof allowing $C$ to go all the way up to $\log^2 M$, or even a power of $M$, would be interesting. We expect that, in reality, the highest $C$ can go is $M$, in that if $C = o(M)$, then with probability $1-o(1)$, after $\frac{M}{2}$ iterations, everything is a $0$ or $1$, while if $C = \omega(M)$, with probability $o(1)$, after $\frac{M}{2}$ iterations, everything is a $0$ or $1$. 

\vs

\section{A Historical Remark}\label{history}

Various sources (websites, blog posts, etc.) have claimed that Proth believed he had proven Gilbreath's conjecture, and that his proof turned out to be wrong. 

\vs

Not only do we currently have no evidence for this claim, the apparent source of this claim has retracted it. 

\vs

The claim seemed plausible, for Proth did publish a paper \cite{proth2} on (what later became known as) Gilbreath's conjecture and did, admittedly confusingly, call it a ``theorem". However, a reading through the paper shows he did not seriously claim a proof. Indeed, Hugh Williams who made the claim about Proth without reference \cite[p.~123]{williams}, said ``On rereading his actual paper ... I can find no support for my assertion. ... My apologies for seeming to have started a myth" \cite{hughwilliams}.

\vs

We also take this time to correct another historical error, which actually is composed of two suberrors. The first suberror is that many sources incorrectly cited \cite{proth1} when referring to Proth's discussion of Gilbreath's conjecture, referring to the correct title ``Th\'{e}or\`{e}mes sur les nombres premiers" but citing Comp. Rend. Acad. Sci. Paris, 85 (1877) instead of Comp. Rend. Acad. Sci. Paris, 87 (1877). The former actually corresponds to a completely unrelated paper of Pepin \cite{pepin}. The second suberror is that, the intended reference, \cite{proth1}, didn't even discuss Gilbreath's conjecture! We were only able to find Proth discussing Gilbreath's conjecture in \cite{proth2}.

\vs

We refer the reader to \cite{juanarias} for more information surrounding all of this.

\vs

\section{Acknowledgments} 

I would like to thank my advisor, Ben Green, for suggesting this problem to me and Daniel Korandi for helpful feedback on the introduction. I would also like to thank Juan Arias de Reyna for bringing to attention the dubious nature of the claim discussed in Section \ref{history}, and Hugh Williams for kindly responding to emails and helping resolve the situation.

\vs

\end{document}